\theoremstyle{plain} 
\theoremstyle{definition} 
\newtheorem{defin}{Definition}
\newtheorem{thm}{Theorem}
\newtheorem*{thmRH}{The Riemann-Hilbert Correspondence for Stacks}
\newtheorem*{theoremrh}{The Classical Riemann-Hilbert Correspondence}
\newcommand{\xs}{\mathcal{X}}
\newcommand{\xa}{\mathfrak{X}}
\newcommand{\C}{\mathbb{C}}
\newcommand{\adj}[4]{#1\negmedspace: #2\rightleftarrows #3:\negmedspace #4}
\newcommand{\shx}{\textbf{Sh}(\xa, \C)}
\newcommand{\shacx}{\textbf{Sh}_{c}(\xa, \C)}
\newcommand{\shcx}{\textbf{Sh}_{alc}(\xa, \C)}
\newcommand{\sis}{\textbf{Set}_{\Delta}}
\newcommand{\SC}{\mathcal{S}}
\newcommand{\dgt}{\textbf{dg-Cat}^{tri}}
\newcommand{\dgtd}{\textbf{dg-Cat}^{tri}_{\Delta}}
\newcommand{\dgti}{\textbf{dg-Cat}^{tri}_{\infty}}
\begin{document}

\title[The Riemann-Hilbert Correspondence for Algebraic Stacks]{The Riemann-Hilbert Correspondence for Algebraic Stacks} 

\author[Alexander G. M. Paulin]{Alexander G.M. Paulin} 

\address{Alexander G.M. Paulin
Department of Mathemtics\endgraf
King's College London \endgraf
Strand \endgraf
London, WC2R 2LS\endgraf
UK
}
\email{alexander.paulin@kcl.ac.uk}

\begin{abstract}
Using the theory $\infty$-categories we construct derived (dg-)categories of regular, holonomic $D$-modules and algebraically constructible sheaves on a complex smooth algebraic stack.  We construct a natural $\infty$-categorical equivalence between these two categories generalising the classical Riemann-Hilbert correspondence.
\end{abstract}

\maketitle

\section{Introduction} 
\noindent
Let $X$ be a smooth algebraic variety over $\C$ and $X^{an}$ be the complex analytic manifold associated to $X$.  Let $\mathcal{D}_{rh}^b(D(X))$ and $\mathcal{D}_c^b(X^{an}, \C)$ denote the derived categories of regular holonomic $D$-modules on $X$ and algebraically constructible complex sheaves on $X^{an}$ respectively.  Both categories are naturally triangulated and we equip $\mathcal{D}_{rh}^b(D(X))$  with the standard $t$-structure and $\mathcal{D}_c^b(X^{an}, \C)$ with the (middle) \textit{perverse} $t$-structure.  The theory of $D$-modules gives rise to a natural triangulated functor
$$\mathfrak{DR}_X: \mathcal{D}_{rh}^b(D(X))\rightarrow \mathcal{D}_c^b(X^{an}, \C),$$
known as the \textit{de Rham} functor.  The Riemann-Hilbert correspondence states that $\mathfrak{DR}_X$ is a $t$-exact equivalence of triangulated categories.  In this paper we generalise this to smooth algebraic stacks over $\C$ which admit an algebraic variety as a smooth atlas.

The first challenge in generalising this result is finding the appropriate concepts of derived categories of $D$-modules and constructible sheaves on smooth algebraic stacks.  The primary reason for this is that the category of triangulated categories is too crude for our purposes: we cannot \textit{glue} triangulated categories is a natural way.  In essence, this is because localising by quasi-isomorphisms discards too much information.  

Various enhancements to the classical theory of triangulated categories have been proposed to correct such defects.  Perhaps the most straightforward of these is the theory of triangulated differential graded (dg-)categories (\cite{TDG}).  A dg-category $\mathcal{C}$ is a category enriched over the category of complexes of $\C$-vector spaces.  To such a category we may naturally associate its homotopy category, denoted $h(\mathcal{C})$.  A dg-category $\mathcal{C}$ is triangulated if, roughly speaking, $h(\mathcal{C})$ is triangulated.  Thus triangulated categories provide an \textit{enhancement} of the category of triangulated categories.  

Another proposed enhancement is the theory of stable $\infty$-categories, as developed by Lurie (\S 1 \cite{LA}).  An $\infty$-category is, very roughly speaking, a higher category with the property that for $n >1$, all $n$-morphisms are invertible.  As in the case of dg-categories, an $\infty$-category $\mathcal{C}$ has a homotopy category $h(\mathcal{C})$.  If  $\mathcal{C}$ is stable  and then $h(\mathcal{C})$ is a triangulated category.  

There is a close relationship between these two approaches.  Given any triangulated dg-category $\mathcal{C}$ we may take its differential graded nerve, $N_{dg}(\mathcal{C})$, to get a $\C$-linear stable $\infty$-category.  In fact this construction gives an equivalence (in an $\infty$-categorical sense) between both theories.  We remark that this is only true because we have fixed the ground field $\C$; in positive characteristic they are not equivalent.  

The collection of all (small) triangulated dg-categories can naturally be arranged into an $\infty$-category, denoted $\dgti$.  This $\infty$-category admits limits (in an $\infty$-categorical sense), allowing us to suitably \textit{glue} triangulated categories.  This will be at the heart of our constructions. 
This approach closely follows that of Gaitsgory in his development of the categorical geometric Langlands correspondence.
\\ \\
\noindent
We now describe in detail the contents of this paper. 

In \S 2 we review the theory of $\infty$-categories and dg-categories, giving a detailed construction of $\dgti$.  

In \S 3 we review the theory of $D$-modules on smooth algebraic varieties, making suitable dg-enhancements of various classical triangulated categories.    Using this we construct the derived, triangulated dg-category of regular, holonomic $D$-modules on $\xs$, a smooth algebraic stack over $\C$, denoted $D_{rh}^b(\xs)$.  The essence of our construction is to define  $D_{rh}^b(\xs)$ as the $\infty$-categorical limit in $\dgti$ of classical dg-categories of $D$-modules over a suitable simplicial Cech cover of $\xs$.
This category is equipped with a standard $t$-structure coming from the standard $t$-structure of classical derived categories of $D$-modules.

In \S 4 we review the theory of construcible sheaves on complex analytic spaces.  As for the theory of $D$-modules, we construct a derived, triangulated dg-category of algebraically constructible sheaves on $\xs^{an}$, denoted  $\textbf{dg-Mod}_c^b(\xs^{an}, \C)$.
This category comes equipped with a \textit{perverse} $t$-structure, coming from the classical (middle) perverse $t$-structure on derived categories of construcible sheaves on complex analytic spaces.

In \S 5 we review the classical Riemann-Hilbert correspondence in the dg-setting.  Using the classical de Rham functor we construct a morphism
$$\widehat{\mathfrak{DR}_{\xs}}_{\infty}:D_{rh}^b(\xs)\rightarrow \textbf{dg-Mod}^b_c(\xs^{an}, \C)$$
in $\dgti$.
Our main theorem is the following: 
\begin{thmRH}
Let $\xs$ be a smooth complex algebraic stack which admits an algebraic variety as a smooth atlas.  Then the $\infty$-categorical de Rham functor $\widehat{\mathfrak{DR}_{\xs}}_{\infty}$ is an equivalence in $\dgti$.  Moreover it is $t$-exact and thus induces a canonical equivalence between the category of regular, holonomic $D$-modules on $\xs$ and the category of perverse sheaves on $\xs$.
\end{thmRH}

I would like to thank Dennis Gaitsgory for various useful conversations in the writing of this paper.

\section{Simplicial Sets and simplicial Categories}
In this paper we develop the theory of the Riemann-Hilbert correspondence on algebraic stacks using Joyal and Lurie's theory of $\infty$-categories. The main reference is the  foundational treatise \cite{LT}.  Following the terminology of \cite{LT},  by an $\infty$-category we mean an $(\infty, 1)$-category.  Loosely speaking this is a higher category such that for $n>1$,  all $n$-morphisms are invertible.  For the convenience of the reader we will review the aspects of the theory relevant to this paper.
\\ \\
\noindent
Let $\textbf{Cat}$ be the category of (small) categories. By convention, morphisms in $\mathbf{Cat}$ are given by covariant functors.  For $n \in \mathbb{Z}$, a non-negative integer, we define $[n]\in \textbf{Cat}$ to be the category with objects $\{0, \cdots, n \}$  and morphisms:
$$Hom_{[n]}(a,b) = 
\begin{cases} 
\emptyset  & \mbox{if } a>b \\
* & \mbox{if } a\leq b
\end{cases}$$
where $*$ denotes a unique morphism. We define the simplex category, denoted $\Delta$, to be the full subcategory of $\textbf{Cat}$  with objects $[n]$, for $n$ a non-negative integer.  Note that this category is a skeleton for the category whose objects are non-empty, finite, totally ordered sets and whose morphisms are non-decreasing functions between them.  
\begin{defin}
Let $\textbf{Set}$ denote the category of sets. A simplicial set is a functor
$$K: \Delta^{op} \rightarrow \textbf{Set}.$$
A morphism between simplicial sets is a natural transformation of the underlying functors. We denote the category of simplicial sets by $\sis$.
\end{defin}
\noindent
For $n\in \mathbb{Z}$, a non-negative integer, and $K\in \sis$, we define the set of $n$-cells of $K$ to be $K_n :=K([n])$. We call the $0$-cells the \textit{vertices} of $K$ and the $1$-cells the \textit{edges} of $K$.  We define the $n$-simplex to be the simplicial set $\Delta^n:= Hom_{\textbf{Cat}}( - , [n])$.  The vertices and edges of $\Delta^n$ are just the objects and morphisms of $[n]$ respectively. We say that an $m$-cell of $\Delta^n$ is non-degenerated if it corresponds to a monomorphism $[m]\rightarrow [n]$.  By the Yoneda lemma there is a natural bijection $K_{n} \cong Hom_{\sis}(\Delta^n, K)$. The boundary of $\Delta^n$, denoted $\partial\Delta^n$, is the simplicial set generated by $\Delta^n$ minus its unique non-degenerate $n$-cell.  For $0\leq m\leq n$ we define the $m^{th}$ horn $\Lambda^n_m\subset \Delta^n$ to be the simplicial set generated by $\partial\Delta^n$ minus the unique non-degenerate (n-1)-cell not containing $m$.  We call $\Lambda^n_m$ an \textit{inner} horn if $m\in \{1, \cdots, n-1\}$.
\\ \\
\noindent
Let \textbf{Top} denote the category of compactly generated, Hausdorff topological spaces.   As outlined in \S 1... of \cite{LT}, there is a natural adjunction
$$\adj{|\;|}{\sis}{\textbf{Top}}{Sing},$$
where $|.|$ is the \textit{geometric realisation} functor and $Sing$ is the \textit{singular complex} functor.  The geometric realisation of $\Delta^n$ in the usual topological $n$-simplex. 
Using this adjunction we define the Quillen model structure (\S A... \cite{LT}) on $\sis$ as follows:
\begin{enumerate}
\item A morphism of simplicial sets $f: X\rightarrow Y$ is a weak equivalence if the morphism $|f|: |X|\rightarrow |Y|$ is a weak homotopy equivalence.
\item The fibrations are the Kan fibrations, i.e. those maps which have the right-lifting property with respect to all horn inclusions  $\Lambda^n_m\subset \Delta^n$
\item The cofibrations are the monomorphisms. 
\end{enumerate}
This gives $\sis$ the structure of a combinatorial model category such that every simplicial set is cofibrant.  
\begin{defin}
A simplicial set which is fibrant with respect to the Quillen model structure is called an $\infty$-groupoid (or Kan complex).  More precisely, a simplicial set $K$ is an $\infty$-groupoid if and only if it satisfies the following property:
\noindent
Given $m\in \{0, \cdots, n\} $, any morphism $\Lambda_m^n\rightarrow K$, admits an extension to a morphism $\Delta^n \rightarrow K$.
\noindent
\end{defin}
\noindent
If $X \in \mathbf{Top}$ then $Sing(X)$ is an $\infty$-groupoid.  
\\ \\
\noindent
Giving \textbf{Top} its classical model structure (fibrations are Serre fibrations), the above adjunction becomes a Quillen equivalence. We define the homotopy category of spaces, denoted $\mathcal{H}$, as the homotopy category of $\sis$ with respect to the Quillen model structure. By the above, this is canonically equivalent to the classical homotopy category of spaces. 
\\ \\
\noindent
The cartesian product gives $\textbf{Set}$ the structure of a symmetric monoidal category.  This induces a symmetric monoidal structure on $\sis$ in the obvious way.  

\begin{defin}For $K, L \in \sis$ we define the simplicial set of maps from $K$ to $L$, denoted $Map(K,L)\in \sis$ as follows:
$$ Map(K,L)_n := Hom_{\sis} (K\times \Delta^n, L).$$
\end{defin}
\noindent
This gives $\sis$ the structure of a closed symmetric monoidal simplicial model category (\S A.... \cite{LT}).  This in turn gives $\mathcal{H}$ the structure of a symmetric monoidal category such that the localisation functor $\sis\rightarrow \mathcal{H}$ is symmetric monoidal.
\\ \\
\noindent
The category $\mathbf{Set}$ provides the foundation for classical category theory. More precisely, the definition of a category relies on both $\mathbf{Set}$ and its natural symmetric monoidal structure coming from the cartesian product.  If $\mathcal{S}$ is any symmetric monoidal category, we can replace $\mathbf{Set}$ with $\mathcal{S}$ in the definition to give the theory of $\mathcal{S}$-enriched categories.  For a detailed discussion of enriched category theory we refer the reader to \S A.1.4 of \cite{LT}.   Roughly speaking, an $\mathcal{S}$-enriched category $\mathcal{C}$ is a class of objects such that for any two objects $a, b\in \mathcal{C}$ there is a mapping object $Map_{\mathcal{C}}(a,b)\in \mathcal{S}$, with the usual extra structure. We reserve the term \textit{hom} exclusively for the classical case. For this perspective, an ordinary category is just a $\mathbf{Set}$-enriched category.  Many categories we naturally encounter are in some way enriched.  For example, if $k$ is a field, categories enriched over $k$-vector spaces are called $k$-linear.  The category of $k$-vector spaces is itself $k$-linear.  
  
One approach to higher category theory is to replace $\mathbf{Set}$ with a suitable symmetric monoidal model category $\mathcal{S}$ (\S 1.1\cite{LT}). The most natural category to consider is $\mathbf{Top}$.  In this case, the concept of a $2$-morphism would be a path between  $1$-morphisms, a $3$-morphism a homotopy between paths, and so on.  This is a perfectly valid approach but given the fact that $\mathbf{Top}$ and $\sis$ are Quillen equivalence model categories we are free to use the latter category.

\begin{defin}
A simplicial category is a category which is enriched over the category $\sis$ of simplicial sets (with respect to the natural symmetric monoidal structure). The category of (small) simplicial categories (where morphisms are given by simplicially enriched functors) will be denoted by $\textbf{Cat}_{\Delta}$.
\end{defin}
\noindent
 \noindent
For a general symmetric monoidal category $\mathcal{S}$ we denote by $\mathbf{Cat}_{\mathcal{S}}$, the category of (small) $\mathcal{S}$-enriched categories. If $\mathcal{T}$ is a second symmetric monoidal category and $f:\mathcal{S}\rightarrow \mathcal{T}$ is a symmetric monoidal functor then $f$ induces a functor $\mathbf{Cat}_{\mathcal{S}}\rightarrow \mathbf{Cat}_{\mathcal{T}}$.
The constant functor $\textbf{Set}\rightarrow \sis$ is symmetric monoidal, hence we may regard an ordinary category as a simplicial category by identifying hom-sets with their constant simplicial sets.  

\begin{defin}
For $\mathcal{S}$, a symmetric monoidal category, and $\mathcal{C}$ an $\mathcal{S}$-enriched category, the (ordinary) category underlying $\mathcal{C}$, denoted $\mathcal{C}_0$, is defined as follows:

\begin{enumerate}
\item The objects of $\mathcal{C}_0$ are the same as the objects of $\mathcal{C}$.
\item For $a, b \in \mathcal{C}_{0}$, $Hom_{\mathcal{C}_{0}}(a,b):= Hom_{\mathcal{S}}(1_{\mathcal{S}}, Map_{\mathcal{C}}(a,b))$,
\end{enumerate}
where $Map_{\mathcal{C}}(a,b)\in \mathcal{S}$ is the mapping object from $a$ to $b$ and $1_{\mathcal{S}}$ is the unit object in $\mathcal{S}$.
\end{defin}
\noindent
In the case when $\mathcal{S}=\sis$, and $\mathcal{C}$ is a simplicial category, the hom-sets in $\mathcal{C}_0$ are the 0-cells of the mapping simplicial sets.  The symmetric monoidal functor $\sis\rightarrow \mathcal{H}$ allows us to consider $\mathcal{C}$ as an $\mathcal{H}$-enriched category, denoted $\tilde{h}(\mathcal{C})$.  We define the homotopy category of $\mathcal{C}$, denoted $h(\mathcal{C})$, to be the
category underlying $\tilde{h}(\mathcal{C})$. There is a canonical functor from $\mathcal{C}_0$ to $h(\mathcal{C})$. The formation of the homotopy category is functorial.  

A morphism $f:\mathcal{C}\rightarrow \mathcal{D}$, between two simplicial categories is called a \textit{weak equivalence} if $\tilde{h}(f):\tilde{h}(\mathcal{C})\rightarrow \tilde{h}(\mathcal{D})$ is an equivalence of $\mathcal{H}$-enriched categories (\S A 3.2.1 \cite{LT}). The category $\mathbf{Cat}_{\Delta}$ admits a natural model structure (called the Bergner model structure, see \S A 3.2.4 of \cite{LT}) with the above weak equivalences. 

The principal weakness of simplicial categories as a model for higher category theory is that the correct notion of functor (in a higher sense) should be a \textit{homotopy coherent} diagram, a more general notion that a simplicially enriched functor.  Roughly speaking, a homotopy coherent diagram is a weakened functor where the associativity conditions only hold up to specified collection of higher homotopies.  In the next section we introduce an alternate, but closely related, model for higher category theory where this issue is neatly resolved.
\section*{The Nerve Functor and $\infty$-categories}
\noindent
For a detailed treatment of the material in this section we refer the reader to \S1.1 of \cite{LT}. 
\\ \\
\noindent
Let $\mathcal{C}$ be an ordinary category.  The \textit{nerve} of $\mathcal{C}$, denoted $N(\mathcal{C})$, is the simplicial set defined as follows: for $n\in \mathbb{Z}$, a non-negative integer,  $N(\mathcal{C})_ n := Hom_{\textbf{Cat}}([n], \mathcal{C})$.  More concretely $N(\mathcal{C})_ n$ is the set of all composable strings of morphisms:
$$C_0\rightarrow \cdots \rightarrow C_n.$$
\noindent
Thus the $0$-cells of $N(\mathcal{C})$ may be identified with the objects of $\mathcal{C}$ and $1$-cells with morphisms of $\mathcal{C}$.  The reader cautious of set theoretic issues is referred to \S 1.1.15 of \cite{LT}. As explained in \S1.1.2 of \cite{LT}, we can canonically recover $\mathcal{C}$ from $N(\mathcal{C})$.  Moreover the nerve defines a fully-faithful functor from $\textbf{Cat}$ to $\sis$. 
By \S1.2.2.2 of \cite{LT}, if $\mathcal{C}$ is an ordinary category, then $N(\mathcal{C})$ has the following important property:
\\ \\
\noindent
For $n$ a positive integer greater than 1 and  $m\in \{1, \cdots, n-1\} $, any morphism $\Lambda_m^n\rightarrow N(\mathcal{C})$ admits a \textit{unique} extension to a morphism $\Delta^n \rightarrow N(\mathcal{C})$.
\\ \\
\noindent
This gives a complete description of the essential image of the nerve functor.  Notice that this extension property is only guaranteed to hold for the \textit{inner} horns; the $\infty$-groupoid condition involved all horns but drops the uniqueness. These two examples motivate the following fundamental definition:
\begin{defin}
An $\infty$-category is a simplicial set $K$ which satisfies the following property: For $n$ a positive integer greater than 1 and  $m\in \{1, \cdots, n-1\} $, any morphism $\Lambda_m^n\rightarrow K$ admits a \textit{not necessarily unique} extension to a morphism $\Delta^n \rightarrow K$.
\end{defin}
\noindent
It is immediately clear that the nerve of an ordinary category is an $\infty$-category.  Similarly an $\infty$-groupoid is an $\infty$-category. Thus the theory of $\infty$-categories simultaneously generalises (ordinary) category theory and topology. 
\\ \\ 
\noindent
The nerve functor admits a natural extension to all simplicial categories as explained in  \S 1.1.5 of \cite{LT}.   This new functor, again denoted by $N$, is sometimes called the simplicial (or homotopy coherent) nerve.  It is part of an adjunction: 
$$\adj{\mathfrak{C}}{\sis}{\mathbf{Cat}_{\Delta}}{N}.$$
As explained in \S 2.2.5.1 of \cite{LT}, there is an alternate model structure on $\sis$ (called the Joyal model structure) where the weak equivalence are defined as follows:

\begin{defin}
Let $S, T \in \sis$ and $f:S\rightarrow T$ be a morphism.  We say that $f$ is a \textit{categorical equivalence} if the induced functor $\mathfrak{C}(f): \mathfrak{C}(S)\rightarrow \mathfrak{C}(T)$, is a weak equivalence with respect to the Bergner model structure on $\mathbf{Cat}_{\Delta}.$
\end{defin}
\noindent
Putting the Joyal model structure on $\sis$, the above adjunction becomes a Quillen equivalence.  The fibrant-cofibrant objects with respect to the Joyal model structure on $\sis$ are precisely the $\infty$-categories. We say that two  $\infty$-categories are \textit{equivalent} if they are categorically equivalent as simplicial sets.  

It is not true that the nerve of any simplicial category is an $\infty$-category.  If, however, the mapping spaces between all objects in a simplicial category are  $\infty$-groupoids then its nerve is an $\infty$-category.  We call any simplicial category with this property fibrant.  Thus we can in some senses think about an $\infty$-category as a category enriched over $\infty$-groupoids.  
\\ \\
\noindent
Let $K$ be an $\infty$-category. The \textit{objects} of $K$ are defined to be the vertices  of the underlying simplicial set.  Thus an object in $K$ is a map of simplicial sets $\Delta^0\rightarrow K$.  We write $a\in K$ to denote an object.  Similarly, \textit{morphisms} of $K$ are defined to be the edges  of the underlying simplicial set.  More precisely a morphism is a map of simplicial sets $f: \Delta^1 \rightarrow K$.  The simplex $\Delta^1$ has two vertices $\{0\}$ and $\{ 1\}$.  Thus any morphism has a source object $f(\{0\}) = a$ and target object $f(\{1\}) = b$.  In the usual way, we express this information as $f:a\rightarrow b$.  For $a\in K$ we define identity morphisms $id_a:a\rightarrow a$ to be the unique extension of $a$ to an edge. 

The inner horn condition guarantees that in an $\infty$-category there is a way to compose two morphisms with the same source and target.  Note however, that a choice of composition is only unique up to a contractible space.  This is perhaps the most conceptually challenging aspect of working with $\infty$-categories as a model for higher category theory.
\\ \\
\noindent
Let $K$ be an $\infty$-category and $a,b \in K$.  We define the space of maps from $a$ to $b$ to be the simplicial set $Map_{K}^R(a,b)$, whose $n$-cells are those morphisms $z: \Delta^{n+1}\rightarrow \mathcal{C}$, such that $z|\Delta^{\{n+1\}} = b$ and $z|\Delta^{\{0, \cdots, n\}}$ is the constant $n$-cell at the vertex $a$.  By \S1.2.2.3 of \cite{LT}, this simplicial set is an $\infty$-groupoid.  It can be shown that if $\mathcal{C}$ is a fibrant simplicial category and $a, b \in \mathcal{C}$, then the $Map_{\mathcal{C}}(a,b)$ is weakly equivalent (for the Quillen model structure on $\sis$) to  $Map_{N(\mathcal{C})}^R(a,b)$.   An object $b \in K$ is said to be $\textit{final}$ if for any $a\in K$ the $\infty$-groupoid $Map_{K}^R(a,b)$ is weakly contractible.
\\ \\
\noindent
An  $\infty$-\textit{functor} between two $\infty$-categories is a natural transformation of the underlying simplicial sets.  This is one of the principal reasons for using $\infty$-categories: we do not need to introduce coherent homotopy diagrams as they are encoded by the underlying simplicial set of an $\infty$-category.  
\\ \\
\noindent
As explained in \S 1.2.3 of \cite{LT}, the nerve functor $N: \textbf{Cat}\rightarrow \sis$ admits a left adjoint $h: \sis\rightarrow\textbf{Cat}$.  If $K\in \sis$ then $h(K)$ is called the \textit{homotopy} category  of $K$. If $\mathcal{C}$ is a fibrant simplicial category then there is a natural equivalence $h(N(\mathcal{C}))\cong h(\mathcal{C})$, where $h(\mathcal{C})$ is the homotopy category introduced in the previous section.  Similarly, if $K$ is an $\infty$-category then there is an equivalence $h(\mathfrak{C}(K))\cong h(K)$.  In the case when $K$ is an $\infty$-category, $h(K)$ admits a more concrete description:  
\begin{enumerate}
\item The objects of $h(K)$ are the objects of $K$.
\item For $X, Y\in h(K)$ the set of morphism from $X$ to $Y$ is the set of \textit{homotopy} classes of morphisms (in $K$) $f: X\rightarrow Y$, denoted $[f]$.
\end{enumerate}
\noindent
Two morphisms $f,g: X\rightarrow Y$ are said to be homotopic, if there exists a 2-cell in $K$ whose boundary is given by:
$$
\xymatrix{
&Y  \ar[dr]^{id_Y}\\
X\ar[ur]^{f} \ar[rr]^{g} & &Y
}$$
As proven in \S1.2.3 of \cite{LT}, this is an equivalence relation when $K$ is an $\infty$-category.  If we have two morphisms $f:X\rightarrow Y$ and $g: Y\rightarrow Z$ in $K$ then these defines a morphism $\Lambda_1^2\rightarrow K$, which we represent by the diagram:
$$
\xymatrix{
&Y  \ar[dr]^{g}\\
X\ar[ur]^{f} & &Z
}$$
By the defining property of $\infty$-categories, we may extend this to a 2-simplex $\Delta^2\rightarrow K$.  We may then take its boundary:
$$
\xymatrix{
&Y  \ar[dr]^{g}\\
X\ar[ur]^{f} \ar[rr]^{g\circ f}& &Z
}$$
Note that the morphism $g\circ f$ is not necessarily unique determined by $f$ and $g$:  it depends on the choice of $2$-simplex.  What is true though, is that it is unique up to homotopy.  Thus we define composition in $h(K)$ to be 
$$[g]\circ [f]:= [g\circ f].$$
This is independent of all choices.  If $\mathcal{C}$ is an ordinary category then $h(N(\mathcal{C}))$ is canonically isomorphic to $\mathcal{C}$.   It can be shown that an $\infty$-category $K$ is an $\infty$-groupoid if and only if $h(K)$ is a groupoid in the usual sense.  

If $K$ is an $\infty$-category (or a simplicial category), we say a morphism, $f:X\rightarrow Y$ in $K$, is an \textit{equivalence} if it becomes an isomorphism in $h(K)$.   Thus we see that an $\infty$-groupoid is an $\infty$-category in which every morphism is an equivalence.  

In \S1.2.5 of \cite{LT}, it is shown that given $K$, an $\infty$-category, there exists a \textit{largest} $\infty$-groupoid $K'\subset K$.  Moreover the functor $K\rightarrow K'$ from $\infty$-categories to $\infty$-groupoids is right adjoint to the natural inclusion of $\infty$-groupoids in $\infty$-categories.
\section*{Differential Graded Categories}
\noindent
For a detailed treatment of the material in this section we refer the reader to \cite{TDG} and \S1.3.1 of \cite{LA}.
\\ \\
\noindent
The symmetric monoidal (Quillen) model category $\sis$ provides the foundation for the theory of simplicial categories.  We now introduce another important, and closely related, class of enriched categories which give a good model for higher category theory.
\\ \\
\noindent
Let $k$ be a field and let $Ch(k)$ denote the category of chain complexes of $k$-vector spaces.  Recall that $Ch(k)$ has a natural closed monoidal model structure, where the product is given by the usual tensor product of chain complexes and the model structure is defined as follows:

\begin{enumerate}
\item Weak equivalences are quasi-isomorphisms.
\item Fibrations are epimorphisms.
\item Cofibrations are monomorphisms.
\end{enumerate}
The homotopy category of $Ch(k)$ is the derived category of $k$, denoted by $D(k)$.  Because $k$ is a field, $D(k)$ is equivalent to the category of $\mathbb{Z}$-graded $k$-vector spaces.  There is a symmetric monoidal structure on $D(k)$, defined in the obvious way, making the localisation functor $Ch(k)\rightarrow D(k)$ symmetric monoidal.

\begin{defin}
A differential graded category (dg-category for short) over $k$, is a category enriched over $Ch(k)$.   The collection of all (small) dg-categories may be arranged into a category whose objects are (small) dg-categories and whose morphisms are $Ch(k)$-enriched functors. As above, we denote this category by $\mathbf{Cat}_{Ch(k)}$.
\end{defin}
\noindent
For the rest of this paper, by a dg-category we mean a dg-category over $k$. Let $\mathcal{C}$ be a dg-category.   For $X,Y \in \mathcal{C}$ we denote the mapping complex by $Map_{\mathcal{C}}(X,Y)$:
$$\xymatrix{\cdots \ar[r] &Map_{\mathcal{C}}(X,Y)_{-1}\ar[r]^d &Map_{\mathcal{C}}(X,Y)_0\ar[r]^d &Map_{\mathcal{C}}(X,Y)_1 \ar[r]&\cdots
}$$
\noindent
The category underlying $\mathcal{C}$ (as a $Ch(k)$-enriched category) has hom-sets given by:
 $$Hom_{\mathcal{C}_0}(X,Y):= Hom_{Ch(k)}(1_{Ch(k)},Map_{\mathcal{C}}(X,Y)) =\{f\in Map_{\mathcal{C}}(X,Y)_0 | df=0\}.$$ This makes it clear that the category underlying a dg-category is $k$-linear.

Because $Ch(k)$ is a symmetric monoidal model category, any dg-category $\mathcal{C}$ naturally gives rise to a $D(k)$-enriched category $\tilde{h}(\mathcal{C})$.  As for simplicial category theory we have the following definition:

\begin{defin}
Let $\mathcal{C}$ be a dg-category.  The homotopy category of $\mathcal{C}$, denoted $h(\mathcal{C})$, is the category underlying $\tilde{h}(\mathcal{C})$.
\end{defin}
\noindent
Concretely, for $X,Y\in h(\mathcal{C})$, we have a natural bijection $Hom_{h(\mathcal{C})}(X,Y)\cong H^0(Map_{\mathcal{C}}(X,Y))$. Note that this implies that the homotopy category of a dg-category is canonically $k$-linear. As in the simplicial case, the formation of the homotopy category is functorial.
\\ \\
\noindent
There is a natural model structure on $\mathbf{Cat}_{Ch(k)}$ (\S A.3.2.4 \cite{LT}) which has the following class of weak equivalences:
\begin{defin}
Let $\mathcal{C}, \mathcal{D} \in \textbf{Cat}_{Ch(k)}$.   We say that a morphism $f: \mathcal{C}\rightarrow \mathcal{D}$ is a \textit{weak equivalence} if $\tilde{h}(f)$ is an equivalence of $D(k)$-enriched categories.  More precisely, when the following conditions are satisfied:
\begin{enumerate}
\item For any two objects $X,Y \in \mathcal{C}$ the morphism $$f_{XY}: Map_{\mathcal{C}}(X,Y)\rightarrow Map_{\mathcal{D}}(f(X), f(Y))$$ is a quasi-isomorphism of chain complexes over $k$.
\item The induces functor $h(f): h(\mathcal{C})\rightarrow h(\mathcal{D})$ is an equivalence of categories.
\end{enumerate}
\end{defin}
\noindent 
As explained in  \S 1.3.1 of \cite{LA}, the Dold-Kan correspondence allows us to transform a dg-category into a fibrant simplicial category.  Applying the (simplicial) nerve functor we can further transform a dg-category into an $\infty$-category. This process is simplified in  \S 1.3.1.6 of \cite{LA}, where Lurie directly constructs a differential graded nerve functor: 
$$N_{dg}: \textbf{dg-Cat}\rightarrow \sis.$$
 If $\mathcal{C}$ is a dg-category then it is straightforward to describe the low dimensional cells of $N_{dg}(\mathcal{C})$:

\begin{enumerate}
\item The $0$-cells of $N_{dg}(\mathcal{C})$ are the objects of $\mathcal{C}$.
\item The $1$-cells of $N_{dg}(\mathcal{C})$ are the morphisms in the underlying category of $\mathcal{C}$.
\item The $2$-cells of $N_{dg}(\mathcal{C})$ are given by the following data:  objects $X,Y, Z \in \mathcal{C}$; morphisms $f \in Hom_{\mathcal{C}_0}(X,Y), g\in Hom_{\mathcal{C}_0}(Y,Z), h\in Hom_{\mathcal{C}_0}(X,Z)$ and an element $z\in Map_{\mathcal{C}}(X,Z)_{-1}$ such that $dz = h - (g\circ f).$
\end{enumerate} 
\noindent
As proven in \S1.3.1.17 of \cite{LA}, given a dg-category $\mathcal{C}$, the differential graded nerve $N_{dg}(\mathcal{C})$ is an $\infty$-category which is categorically equivalent to the $\infty$-category given by the construction utilising the Dold-Kan correspondence and the simplicial nerve functor.  There is also a canonical equivalence of homotopy categories:
$$h(\mathcal{C}) \cong h(N_{dg}(\mathcal{C})).$$
\noindent
Just as for simplicial categories, the correct notion of (higher) functors between dg-categories should be given by homotopy coherent diagrams, appropriately generalising  $Ch(k)$-enriched functors. Thankfully, the differential graded nerve gives us a straightforward way to make this precise.  

\begin{defin} An $\infty$-functor between two differential graded categories $\mathcal{C}$ and $\mathcal{D}$ is a $k$-linear $\infty$-functor of the underlying $\infty$-categories, $N_{dg}(\mathcal{C})$ and $N_{dg}(\mathcal{D})$.  An $\infty$-functor $f: N_{dg}(\mathcal{C})\rightarrow N_{dg}(\mathcal{D})$ is $k$-linear if the induced functor $h(f):h(\mathcal{C})\rightarrow h(\mathcal{D})$ is enriched over $k$-vector spaces.
\end{defin}
\noindent
Using this, we may naturally arrange (small) dg-categories into an $\infty$-category as follows:
\begin{defin}
Let $\textbf{dg-Cat}_{\Delta}$ be the fibrant simplicial category defined as follows:
\begin{enumerate}
\item The objects of $\textbf{dg-Cat}_{\Delta}$ are (small) dg-categories over $k$.
\item Given $\mathcal{C}, \mathcal{D}\in \textbf{dg-Cat}_{\Delta}$, the mapping space $Map_{\textbf{dg-Cat}_{\Delta}}(\mathcal{C}, \mathcal{D})$ is the largest $\infty$-groupoid contained in the restriction of $Map_{\sis}(N_{dg}(\mathcal{C}), N_{dg}(\mathcal{D}))$ to $k$-linear $\infty$-functors.
\end{enumerate}
We define the $\infty$-category of (small) dg-categories to be $\textbf{dg-Cat}_{\infty}= N(\textbf{dg-Cat}_{\Delta})$.  We denote the category underlying $\textbf{dg-Cat}_{\Delta}$ by $\textbf{dg-Cat}$.
\end{defin}

Following \S 4.4 of \cite{TDG} there is a subclass of dg-categories called triangulated.  We refer the reader to \S 4.4.7 for a precise definition. Being triangulated implies that the homotopy category is a triangulated category in the classical sense.  Thus the theory of triangulated dg-categories is an enhancement of the theory of triangulated categories.   

\begin{defin}Let $\mathcal{C}$ and $\mathcal{D}$ be triangulated dg-categories.  We say that an $\infty$-functor $f$ from $\mathcal{C}$ to $\mathcal{D}$ is exact if the induced (ordinary) functor $h(f):h(\mathcal{C})\rightarrow h(\mathcal{D})$ is an exact functor of triangulated categories.
\end{defin}
\begin{defin}
Let $\textbf{dg-Cat}^{tri}_{\Delta}$ be the fibrant simplicial category defined as follows:
\begin{enumerate}
\item The objects of $\textbf{dg-Cat}^{tri}_{\Delta}$ are (small) triangulated dg-categories over $k$.
\item Given $\mathcal{C}, \mathcal{D}\in \textbf{dg-Cat}^{tri}_{\Delta}$, the mapping space $Map_{\textbf{dg-Cat}_{\Delta}}(\mathcal{C}, \mathcal{D})$ is the largest $\infty$-groupoid contained in the restriction of $Map_{\sis}(N_{dg}(\mathcal{C}), N_{dg}(\mathcal{D}))$ to exact $k$-linear $\infty$-functors.
\end{enumerate}
We define the $\infty$-category of (small) triangulated dg-categories to be $\textbf{dg-Cat}^{tri}_{\infty}= N(\textbf{dg-Cat}^{tri}_{\Delta})$.  We denote the category underlying $\textbf{dg-Cat}^{tri}_{\Delta}$ by $\textbf{dg-Cat}^{tri}$.
\end{defin}
\noindent
The differential graded nerve functor preserves weak equivalences, with respect to the Joyal model structure on $\sis$.  Thus if $\mathcal{C}$ and $\mathcal{D}$ are dg-categories and $f:\mathcal{C}\rightarrow \mathcal{D}$ is a weak equivalence, the $\infty$-functor $N_{dg}(f)$ is an equivalence in the $\infty$-category $\textbf{dg-Cat}_{\infty}$.  
\\ \\
\noindent
By a $t$-structure on a triangulated dg-category $\mathcal{C}$ we we mean a $t$-structure on the triangulated category $h(\mathcal{C})$.  We define the heart  of a $t$-structure on $\mathcal{C}$, denoted $\mathcal{C}^{\heartsuit}$,  to be heart of the $t$-structure on the underlying homotopy category.  
Given an $\infty$-functor between triangulated dg-categories $f: \mathcal{C}\rightarrow \mathcal{D}$, each equipped with a $t$-structure, we say that $f$ is left $t$-exact (respectively right $t$-exact) if $h(f): h(\mathcal{C})\rightarrow h(\mathcal{D})$ is left $t$-exact (respectively right  $t$-exact) in the classical sense.  

The homotopy functor from from $\dgt$ to $\mathbf{Cat}$ induces a functor $h(\dgti)\rightarrow h(\mathbf{Cat})$, where $h(\mathbf{Cat})$ is the localisation of $\mathbf{Cat}$ by equivalences.  This latter category is equivalent to the category $[\mathbf{Cat}]$, whose objects are (small) categories and whose  morphisms are isomorphism classes of functors.  If we take a morphism in $h(\dgti)$ then it makes sense to talk about it being $t$-exact because being $t$-exact is an invariant of the isomorphism class of a functor.

\section*{Homotopy Limits and Limits in $\infty$-Categories}
\noindent
For a detailed treatment of the material in this section we refer the reader to \S1.2.13 of \cite{LT}.
\\ \\
\noindent
There is a natural notion of limits in $\infty$-categories generalising the classical case.  Let $\mathcal{C}$ and $\mathcal{D}$ be $\infty$-categories and $p: \mathcal{C}\rightarrow\mathcal{D}$ be an $\infty$-functor.  As explained in \S 1.2.9 of \cite{LT}, mimicking the classical construction, we may form \textit{the $\infty$-category of objects of $\mathcal{D}$ over $p$}, denoted $\mathcal{D}_{/p}$.  The $n$-cells of $\mathcal{D}_{/p}$ are the morphisms of simplicial sets
$$\Delta^n \star \mathcal{C}\rightarrow \mathcal{D},$$
whose restriction to $\mathcal{C}$ is $p$.  Here $\star$ denotes the \textit{join} of two $\infty$-categories, as constructed in \S1.2.8 of \cite{LT}. Note that because $\Delta^0 \star \mathcal{C}$, denoted $\mathcal{C}^{\triangleleft}$, has a privileged object \{0\} (the cone point), any object $X\in  \mathcal{D}_{/p}$ gives a canonical object $X(\{0\})\in \mathcal{D}$. 

\begin{defin}
A limit of the $\infty$-functor $p: \mathcal{C}\rightarrow\mathcal{D}$ is a final object in the $\infty$-category $\mathcal{D}_{/p}$. If a limit exists we denote it by $\varprojlim p$.
\end{defin}
\noindent
By definition limits are unique up to a weakly contractible space.  By an abuse of notation we will often refer to the limit of $p$ as the object $$lim(p):=\varprojlim p(\{0\})\in \mathcal{D}.$$
\noindent
Let us now relate $\infty$-categorical limits to more classical homotopy limits.  Let $\SC$ be a combinatorial model category.  For example, $\sis$ equipped with either the Quillen or Joyal model structures.  Let $\mathcal{I}$ be a small (ordinary) category.  By definition, $\SC$ admits small limits, thus the constant functor
$$\SC\rightarrow \SC^{\mathcal{I}}:=Fun(\mathcal{I}, \SC),$$
admits a right adjoint, denoted $lim:\SC^{\mathcal{I}}\rightarrow \SC$.  On objects this sends a functor $\mathcal{F}:\mathcal{I}\rightarrow\SC$ to $lim(\mathcal{F})$ in the usual sense.  The model structure on $\SC$ induces a natural model structure on the functor category $\SC^{\mathcal{I}}$, making $lim$ a right Quillen functor.  Thus we may form the right derived functor

$$Rlim: h(\SC^{\mathcal{I}})\rightarrow h(\SC),$$
by composing $lim$ with a fibrant replacement functor.  As usual,  $h$ denotes the homotopy category of the underlying model category.  

\begin{defin}If $\mathcal{F}\in\SC^{\mathcal{I}}$ then we define the homotopy limit of $\mathcal{F}$ to be  $holim(\mathcal{F}):= Rlim(\mathcal{F}),$
after identifying $\mathcal{F}$ with its image under the localisation functor $\SC^{\mathcal{I}}\rightarrow h(\SC^{\mathcal{I}})$.
\end{defin}
\noindent
From now on let $\SC$ be $\sis$ equipped with the Quillen model structure and let $\mathcal{C}$ be a simplicial category. As above, let $\mathcal{C}_0$ denote the ordinary category underlying $\mathcal{C}$, and let $\mathcal{I}$ denote an ordinary small category.  We denote by $\mathcal{I}_{\Delta}$, the simplicial category associated to $\mathcal{I}$.
\noindent
Let $\mathcal{F}:\mathcal{I}\rightarrow \mathcal{C}_0$ be a functor and choose $B\in \mathcal{C}_0$ together with a compatible collection of morphisms
$$\eta_{\mathcal{I}}:=\{B\rightarrow \mathcal{F}(i)\}_{i\in \mathcal{I}}.$$
We remark that this is equivalent to choosing a cone over $\mathcal{F}$ with vertex $B$.   For any $A\in \mathcal{C}_0$, this data induces a morphism of simplicial sets
$$Map_{\mathcal{C}}(A, B)\rightarrow lim(\mathcal{F}_A),$$
where  $\mathcal{F}_A: \mathcal{I}\rightarrow \sis$ is the functor sending $i\in \mathcal{I}$ to $Map_{\mathcal{C}}(A, \mathcal{F}(i)).$
This further induces a map of simplicial sets
$$Map_{\mathcal{C}}(A, B)\rightarrow holim(\mathcal{F}_A).$$
We say that $\eta$ \textit{exhibits} $B$ as a homotopy limit of $\mathcal{F}$ if each such morphism is a weak equivalence in $\sis$ for all $A\in \mathcal{C}_0$.
\\ \\
\noindent
We now relate this to $\infty$-categorical limits.  The functor $\mathcal{F}$ induces a simplicial functor:
$$\mathcal{F}_{\Delta}:\mathcal{I}_{\Delta}\rightarrow \mathcal{C}.$$
The nerve functor gives a morphism of simplicial sets 
$$\mathcal{F}_{\infty}= N(\mathcal{F}_{\Delta}):N(\mathcal{I}_{\Delta})\rightarrow N(\mathcal{C}).$$
Let us now make the additional assumption that $\mathcal{C}$ is a fibrant simplicial category.  Thus $N(\mathcal{C})$ is an $\infty$-category. The data of $\eta_{\mathcal{I}}$ induces a morphism
$$\overline{\mathcal{F}_{\infty}}:N(\mathcal{I}_{\Delta})^{\triangleleft}\rightarrow N(\mathcal{C}),$$
extending $\mathcal{F}_{\infty}$, with cone point $B$.  The following result of Lurie (\S4.2.4.1 \cite{LT}) is fundamental:
\begin{thm}  With the same notation as above, the following are equivalent
\begin{enumerate}
\item The data $\eta_{\mathcal{I}}$ exhibits $B$ as a homotopy limit of $\mathcal{F}$.
\item The functor $\overline{\mathcal{F}_{\infty}}$ is a limit diagram of $\mathcal{F}_{\infty}$.
\end{enumerate}
\end{thm}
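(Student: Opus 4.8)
The plan is to reduce both conditions to comparison statements about mapping spaces and then match them. The bridge between classical homotopy limits and $\infty$-categorical limits is the characterisation of limit diagrams via corepresentable functors: a cone $\overline{\mathcal{F}_\infty}: N(\mathcal{I}_\Delta)^\triangleleft \to N(\mathcal{C})$ with cone point $B$ is a limit diagram of $\mathcal{F}_\infty$ if and only if, for every object $A \in N(\mathcal{C})$, the canonical comparison map of $\infty$-groupoids
$$Map^R_{N(\mathcal{C})}(A, B) \longrightarrow \varprojlim_{i\in \mathcal{I}} Map^R_{N(\mathcal{C})}(A, \mathcal{F}_\infty(i))$$
is an equivalence, the limit on the right being the $\infty$-categorical limit of the induced diagram of $\infty$-groupoids. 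First I would establish this characterisation by unwinding the definition of a final object in the slice $N(\mathcal{C})_{/\mathcal{F}_\infty}$ and using the compatibility of the mapping space $Map^R$ with the join construction; this is the $\infty$-categorical analogue of the elementary identity $Hom(A, \varprojlim(-)) = \varprojlim Hom(A, -)$.

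Second, I would identify the $\infty$-categorical limit on the right with a classical homotopy limit. The $\infty$-category of spaces is modelled by $\sis$ equipped with its Quillen model structure, and under this modelling the $\infty$-categorical limit of a diagram of $\infty$-groupoids is computed by the derived functor $Rlim = holim$ of the corresponding diagram in $\sis$. Thus the target of the comparison map is weakly equivalent to $holim$ of the diagram $i \mapsto Map^R_{N(\mathcal{C})}(A, \mathcal{F}_\infty(i))$.

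Third, I would compare this diagram with $\mathcal{F}_A$. Since $\mathcal{C}$ is a fibrant simplicial category, for each fixed $i$ there is a weak equivalence $Map_{\mathcal{C}}(A, \mathcal{F}(i)) \simeq Map^R_{N(\mathcal{C})}(A, \mathcal{F}_\infty(i))$, as recalled earlier in this section. The decisive point is to promote this objectwise equivalence to a weak equivalence of $\mathcal{I}$-diagrams in $\sis$, so that the homotopy limits agree: $holim(\mathcal{F}_A) \simeq holim\bigl(i \mapsto Map^R_{N(\mathcal{C})}(A, \mathcal{F}_\infty(i))\bigr)$. Granting this, condition (2) unwinds to the statement that $Map_{\mathcal{C}}(A, B) \to holim(\mathcal{F}_A)$ is a weak equivalence for every $A \in \mathcal{C}_0$, which is exactly condition (1); one checks along the way that the cone induced by $\eta_{\mathcal{I}}$ corresponds under these identifications to the comparison map defining a homotopy limit.

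The main obstacle is the naturality required in the third step. The excerpt records the equivalence $Map_{\mathcal{C}}(a,b) \simeq Map^R_{N(\mathcal{C})}(a,b)$ only objectwise, whereas homotopy limits are not preserved by objectwise equivalences of arbitrary diagrams; one must realise these equivalences as the components of a genuine weak equivalence of functors $\mathcal{I} \to \sis$, and verify that the diagrams involved are projectively fibrant (or replace them functorially) so that $Rlim$ computes the intended derived limit. Establishing this coherent, diagram-level comparison is where the substantive work lies; by contrast, the first two steps are formal consequences of the general theory of limits in $\infty$-categories and of the identification of the $\infty$-category of spaces with $\sis$.
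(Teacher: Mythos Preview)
The paper does not give a proof of this statement at all: it merely records the result as ``Lurie (\S 4.2.4.1 \cite{LT})'' and moves on. So there is no argument in the paper to compare your proposal against; the paper treats the theorem as a black box imported from \emph{Higher Topos Theory}.

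Your outline is a reasonable sketch of why the result is true, and it identifies the genuine content correctly: the reduction of ``$\overline{\mathcal{F}_\infty}$ is a limit cone'' to a mapping-space criterion, the identification of $\infty$-categorical limits of spaces with classical homotopy limits in $\sis$, and then the comparison of the simplicial mapping spaces $Map_{\mathcal{C}}(A,\mathcal{F}(i))$ with $Map^R_{N(\mathcal{C})}(A,\mathcal{F}_\infty(i))$. You are also right that the nontrivial step is promoting the objectwise equivalence of mapping spaces to a coherent equivalence of $\mathcal{I}$-diagrams; this is exactly where Lurie's machinery (the straightening/unstraightening equivalence and the comparison of various models for mapping spaces) does the work. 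If you intend to actually carry this out rather than cite it, be aware that Lurie's argument in \S 4.2.4 of \cite{LT} does not proceed quite as you describe: it goes through the theory of limits in functor $\infty$-categories and the comparison with homotopy limits in simplicial model categories developed in \S A.3.3, rather than a direct mapping-space computation. Your route is conceptually cleaner but would require you to independently establish the diagram-level naturality you flag as the obstacle, which is not a small matter.
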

\noindent
This shows that there is a close relationship between homotopy limits in fibrant simplicial categories and $\infty$-categorical limits.  
\begin{thm}
Let $\mathcal{C}$ be a fibrant simplicial category. Let $\mathcal{F}, \mathcal{G}\in Fun(\mathcal{I}, \mathcal{C}_0)$ be two ordinary functors.  Assume that $\varphi:\mathcal{F}\rightarrow \mathcal{G}$ is a natural transformation which becomes a natural isomorphism after composing both $\mathcal{F}$ and $\mathcal{G}$ with the canonical functor $\mathcal{C}_0\rightarrow h(\mathcal{C}).$  If $lim(\mathcal{F}_{\infty})$ and  $lim(\mathcal{G}_{\infty})$ exist then $\varphi$ induces a canonical (up to homotopy) equivalence $\varphi_{\infty}: lim(\mathcal{F}_{\infty})\rightarrow lim(\mathcal{G}_{\infty})$.
\end{thm}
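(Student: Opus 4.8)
The plan is to build the equivalence $\varphi_\infty$ directly on the $\infty$-category of cones and verify it is an equivalence by reducing to the previous theorem. First I would observe that the natural transformation $\varphi: \mathcal{F} \to \mathcal{G}$ of ordinary functors into $\mathcal{C}_0$ induces, via the passage to the associated simplicial functors and the nerve, a morphism of the extended cone diagrams, compatible with the structure morphisms exhibited in Theorem~2.17 (the Lurie criterion). Concretely, composing with the canonical functor $\mathcal{C}_0 \to \mathcal{C}$ and then taking $N(\mathcal{F}_\Delta)$ and $N(\mathcal{G}_\Delta)$ yields $\infty$-functors $\mathcal{F}_\infty, \mathcal{G}_\infty : N(\mathcal{I}_\Delta) \to N(\mathcal{C})$, and $\varphi$ induces a natural transformation between them, i.e. a morphism $N(\mathcal{I}_\Delta) \times \Delta^1 \to N(\mathcal{C})$ restricting to $\mathcal{F}_\infty$ and $\mathcal{G}_\infty$ on the two ends. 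Since $lim(\mathcal{F}_\infty)$ and $lim(\mathcal{G}_\infty)$ are final objects in the respective slice $\infty$-categories $N(\mathcal{C})_{/\mathcal{F}_\infty}$ and $N(\mathcal{C})_{/\mathcal{G}_\infty}$, the transformation functorially produces a morphism between the limit objects; this is the candidate for $\varphi_\infty$.

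The key step is to exploit the hypothesis that $\varphi$ becomes a \emph{natural isomorphism} after composition with $\mathcal{C}_0 \to h(\mathcal{C})$. By the characterisation of equivalences in $N(\mathcal{C})$ (a morphism is an equivalence precisely when it becomes an isomorphism in $h(N(\mathcal{C})) \cong h(\mathcal{C})$), this means that for each object $i \in \mathcal{I}$ the component $\varphi_i : \mathcal{F}(i) \to \mathcal{G}(i)$ is an equivalence in the $\infty$-category $N(\mathcal{C})$. I would then translate this into the mapping-space picture: for any $A \in \mathcal{C}_0$, postcomposition by $\varphi_i$ induces a weak equivalence $Map_{\mathcal{C}}(A, \mathcal{F}(i)) \to Map_{\mathcal{C}}(A, \mathcal{G}(i))$ of Kan complexes, so the induced natural transformation of diagrams $\mathcal{F}_A \to \mathcal{G}_A : \mathcal{I} \to \sis$ is an objectwise weak equivalence.

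Since the homotopy limit functor $holim$ on $\sis^{\mathcal{I}}$ (being the right derived functor of $lim$ through a fibrant replacement) preserves objectwise weak equivalences between diagrams, the map $holim(\mathcal{F}_A) \to holim(\mathcal{G}_A)$ is a weak equivalence for every $A$. Now I would invoke the equivalence in Theorem~2.17: the data exhibiting $lim(\mathcal{F}_\infty)$ and $lim(\mathcal{G}_\infty)$ as $\infty$-categorical limits corresponds to cones exhibiting the respective objects as homotopy limits, and the maps $Map_{\mathcal{C}}(A, lim(\mathcal{F})) \to holim(\mathcal{F}_A)$ and $Map_{\mathcal{C}}(A, lim(\mathcal{G})) \to holim(\mathcal{G}_A)$ are weak equivalences. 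A diagram chase, comparing these four mapping spaces across $\varphi$, forces $\varphi_\infty$ to induce a weak equivalence $Map_{\mathcal{C}}(A, lim(\mathcal{F})) \to Map_{\mathcal{C}}(A, lim(\mathcal{G}))$ for all $A$; applying this with $A$ ranging over all objects and using the Yoneda-type criterion for equivalences in an $\infty$-category shows $\varphi_\infty$ is an equivalence.

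The main obstacle I anticipate is not the objectwise comparison but the \emph{coherence} of the induced map on limits: one must check that the morphism $\varphi_\infty$ obtained from functoriality of the slice construction genuinely agrees (up to the contractible ambiguity inherent in $\infty$-categorical limits) with the map assembled from the homotopy-limit comparison, so that "canonical up to homotopy" is justified. Making this precise requires care that the final-object universal property of $\mathcal{D}_{/p}$ is used uniformly for $\mathcal{F}$ and $\mathcal{G}$ and that the translation through Theorem~2.17 is natural in the diagram; I would phrase the argument so that the weak equivalence of mapping spaces is the \emph{defining} property, thereby absorbing the coherence issue into the contractibility of the space of limits rather than attempting to track an explicit cellular formula for $\varphi_\infty$.
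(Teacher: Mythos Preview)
Your proposal is correct and follows essentially the same route as the paper: objectwise weak equivalence of the mapping-space diagrams $\mathcal{F}_A \to \mathcal{G}_A$, preservation under $holim$, the comparison square coming from Lurie's criterion (the previous theorem), and Yoneda. The one organizational difference is that the paper sidesteps your coherence worry entirely by \emph{defining} $\varphi_\infty$ as a lift to $\mathcal{C}$ of the isomorphism in $h(\mathcal{C})$ produced by Yoneda, rather than first constructing it via functoriality of the slice and then checking compatibility.
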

\begin{proof}
We will show that if $lim(\mathcal{F}_{\infty})$ and $lim(\mathcal{G}_{\infty})$ exist, then $\varphi$ induces an canonical isomorphism (in $h(\mathcal{C})$) between them.  To do this we show that $\varphi$ induces a canonical isomorphism of their respective hom-functors in $h(\mathcal{C})$.

Let $A\in \mathcal{C}$. The natural transformation $\varphi$ induces a natural transformation $\varphi_A: \mathcal{F}_A\rightarrow \mathcal{G}_A$ in $\sis^{\mathcal{I}}$.  By \S 1.2.4.1 of \cite{LT} we know that this is a weak equivalence for the natural model structure on the diagram category $\sis^{\mathcal{I}}$.
Thus $\varphi_A$ induces a weak equivalence $holim(\mathcal{F}_A)\rightarrow holim(\mathcal{G}_A)$.  

As above, let $\varprojlim(\mathcal{F}_{\infty}), \varprojlim(\mathcal{G}_{\infty}): N(\mathcal{I}_{\Delta})^{\triangleleft}\rightarrow N(\mathcal{C})$  be the respective $\infty$-categorical limits of $\mathcal{F}_{\infty}$ and $\mathcal{G}_{\infty}$.  These respectively give rise to the collection of morphisms
$$\eta_{\mathcal{I}} = \{ lim(\mathcal{F}_{\infty})\rightarrow \mathcal{F}(i)\}_{i\in \mathcal{I}},$$
$$\gamma_{\mathcal{I} }= \{ lim(\mathcal{G}_{\infty})\rightarrow \mathcal{G}(i)\}_{i\in \mathcal{I}}.$$
\noindent
As discussed above, all this data gives rise to the following diagram
$$
\xymatrix{
Map_{\mathcal{C}}(A, lim(\mathcal{F}_{\infty})) \ar[r] & holim(\mathcal{F}_A)\ar[d] \\ Map_{\mathcal{C}}(A, lim(\mathcal{G}_{\infty})) \ar[r] &holim(\mathcal{G}_A)
}$$
All the arrows in this diagram are weak equivalences of simplicial sets.  Thus we get a canonical isomorphism $Map_{\mathcal{C}}(A, lim(\mathcal{F}_{\infty})) \cong Map_{\mathcal{C}}(A, lim(\mathcal{G}_{\infty}))$ in $\mathcal{H}$.  This induces a canonical bijection $Hom_{h(\mathcal{C})}(A, lim(\mathcal{F}_{\infty})) \cong Hom_{h(\mathcal{C})}(A, lim(\mathcal{G}_{\infty}))$.  This is natural in $A$ and thus induces a natural  isomorphism between their respective hom-functors in $h(\mathcal{C})$.  By the Yoneda lemma, this induces a canonical isomorphism $lim(\mathcal{F}_{\infty})\cong lim(\mathcal{G}_{\infty})$ in $h(\mathcal{C})$.  We let 
$$\varphi_{\infty} :lim(\mathcal{F}_{\infty})\rightarrow lim(\mathcal{G}_{\infty})$$
\noindent
be a lift of this isomorphism to a morphism in $\mathcal{C}$.  By construction $\varphi_{\infty}$ is an equivalence in $\mathcal{C}$ and is canonical up to homotopy.    
\end{proof}
\noindent
We remark that the proof of theorem 2 shows that if we take any natural transformation $\varphi:\mathcal{F}\rightarrow G$, then it induces a canonical (up to homotopy) morphism $\varphi_{\infty}$ on their respective limits, if they exist.

By \S 1.3.1 of \cite{GD},  the $\infty$-category $\textbf{dg-Cat}_{\infty}^{tri}$ admits (small) limits.  This fact is fundamental and justifies why the theory of triangulated dg-categories is superior to triangulated categories, where limits do not exists.

\section{$\mathcal{D}$-modules on Algebraic Stacks}
\noindent
For a detailed treatment of the material in this section we refer the reader to \S1, \S3 and \S6 of \cite{DH}.
\\ \\
\noindent
For the rest of this paper set $k=\C$. Let $X$ be a smooth complex algebraic variety.    Let $D$-$mod(X)$ denote the $\C$-linear abelian category of algebraic (left) $D$-modules on $X$ (\S1.2 \cite{DH}).  Let $D$-$mod_{rh}(X)$ denote the full subcategory of regular holonomic $D$-modules on $X$ (\S6.1 \cite{DH}).

The category $D$-$mod(X)$ is $\C$-linear, hence it follows from \S1.3.1 of \cite{LA}, that bounded complexes of $D$-modules on $X$ naturally form a dg-category, which we denote by $Ch^b(D$-$mod(X))$.  Let $D^b(X)\subset Ch^b(D$-$mod(X))$ denote the full sub-dg-category of bounded complexes of injective objects.  We call $D^b(X)$ the bounded derived dg-category of $D$-modules on $X$.  This is a triangulated dg-category and the classical bounded derived category of $D$-modules is canonically equivalent to its homotopy category.
We define the bounded derived dg-category of regular, holonomic $D$-modules to be $D^b_{rh}(X) \subset D^b(X)$, the full sub-dg-category with regular, holonomic homology.  We equip $D^b_{rh}(X)$ with its standard $t$-structure, with heart isomorphic to the category of regular holonomic $D$-modules on $X$.  
\\ \\
\noindent
Let $f:X\rightarrow Y$ be a morphism of smooth algebraic varieties over $\C$.  Following \S1.5 of \cite{DH}, we have the direct and inverse image (Ch($\C$)-enriched) functors:
$$f_* : D^b(X) \rightarrow D^b(Y) \;\;\; f^{\dagger}: D^b(Y)\rightarrow D^b(X).$$
We define the shifted inverse image functor to be
$$f^{!}  := f^{\dagger} [dimX - dimY].$$
All of these functors preserve the sub-dg-category of regular holonomic $D$-modules.
\\ \\
\noindent
There is also the Verdier involution
$$\mathbb{D}_X : D^b(X)^{op} \rightarrow D^b(X),$$
defined in the usual way, using the canonical bundle on X (\S2.6 \cite{DH}).  This involution descends to an involution of the derived dg-category of regular, holomonic $D$ modules.  
\\ \\
\noindent
We define the functors
$$f_! : D(X) \rightarrow  D(Y) \;\;\;\;\;\;\; f^{\bigstar} : D(Y)\rightarrow D(X)$$
$$ f_! = \mathbb{D}_Y \circ f_* \circ \mathbb{D}_X\;\; \;\;\;\;\;\; f^{\bigstar} = \mathbb{D}_X\circ f^{!}\circ\mathbb{D}_Y.$$
We have the following standard facts:

\begin{itemize}
\item These functors descend to functors on the sub dg-categories of regular holonomic $D$-modules.
\item The functors $(f_!, f^{!})$ and $( f^{\bigstar}, f_*)$ are adjoint pairs.
\item If $f$ is proper then $f_* = f_!$.
\item For $f$ smooth, the non-shifted inverse image functor $f^{\dagger}$, is $t$-exact.
\item If $f$ is smooth of relative dimension $d$ then there is a  canonical natural isomorphism $f^{\bigstar}\cong f^![-2d]$.
\item If $X, Y$ and $Z$ are smooth complex algebraic varieties and $f:X\rightarrow Y$ and $g:Y\rightarrow Z$ are two morphisms, then there is a canonical isomorphism of functors $(gf)^{!}\cong f^!g^!$.  The same holds for $\bigstar$ and $\dagger$.  
\end{itemize}
We now address the problem of defining suitable categories of $D$-modules on algebraic stacks.

Observe that $\dgt$ is a strict 2-category, where 2-morphisms are given by $\infty$-natural transformations.  If $\mathcal{I}$ is a small ordinary category and we are given $\eta: \mathcal{I}\rightarrow \mathbf{Sch}_{\C}$, an $\mathcal{I}$-diagram of smooth complex varieties $(X_i)_{i\in \mathcal{I}}$,  then the pullback functor $\bigstar$ induces the pseudofunctor:
$$\mathfrak{Drh}_{\eta}^{\bigstar}: \mathcal{I}^{op}\rightarrow \dgt.$$
$$\;\;\; \;i \mapsto D_{rh}^b(X_i).$$
Let $\widehat{\mathfrak{Drh}_{\eta}^{\bigstar}}$ denote the strictification of this pseudofunctor.  Recall that $\widehat{\mathfrak{Drh}_{\eta}^{\bigstar}}$ is a strict 2-functor $\mathcal{I}^{op}\rightarrow \dgt$ equipped with a canonical pseudonatural equivalence $\mathfrak{Drh}_{\eta}^{\bigstar}\rightarrow\widehat{\mathfrak{Drh}_{\eta}^{\bigstar}}$, satisfying the usual universal property.  Thus we may consider $\widehat{\mathfrak{Drh}_{\eta}^{\bigstar}}$ as an ordinary functor of 1-categories and apply the results of the previous paragraph.

Throughout the rest of this paper, let $\xs$ be a smooth complex algebraic (Artin) stack which admits an algebraic variety as a smooth atlas. Let $\pi: X\rightarrow \xs$ be such an atlas.  We may associate to this data the Cech smooth simplicial scheme  $X_{\bullet}\rightarrow \xs$.  More precisely, this is the simplicial scheme 
$$\pi_{\bullet}:  \Delta^{op}\rightarrow \textbf{Sch}_k$$
$$\;\;\; \;[n] \mapsto X_n,$$
where $X_n$ is the $(n+1)$-fiber product of $X$ with itself over $\xs$.  As above, this induces the strict 2-functor:
$$\widehat{\mathfrak{Drh}^{\bigstar}_{\pi_{\bullet}}}: \Delta \rightarrow \dgt$$
This induces the simplicially enriched functor 
$$\widehat{\mathfrak{Drh}^{\bigstar}_{\pi_{\bullet}}}_{\Delta}: \Delta \rightarrow \dgtd,$$
where we consider $\Delta$ as a simplicial category. By taking the nerve we have the $\infty$-functor:
$$\widehat{\mathfrak{Drh}^{\bigstar}_{\pi_{\bullet}}}_{\infty}: N(\Delta) \rightarrow \dgti.$$

\begin{defin}
We define the bounded derived dg-category of regular, holonomic  $D$-modules on $\xs$ to be 
$$D_{rh}^b(\xs):=lim(\widehat{\mathfrak{Drh}^{\bigstar}_{\pi_{\bullet}}}_{\infty}).$$
\noindent
\end{defin}
\noindent
A standard argument shows this to be independent of the choice of atlas. By applying the $\infty$-categorical version of Grothendieck's pseudofunctor/fibred-category equivalence (\S 3.3.3.2 \cite{LT})  we have the following concrete description of objects of this limit:  An object $M \in D_{rh}^b(\xs)$ is an assignment for every non-negative integer $n$, an  object $M_{X_n} \in D_{rh}^b(X_n)$, and for every morphism $\phi:[n] \rightarrow [m]$ in $\Delta$ (inducing a morphism $f_{\phi}: X_m\rightarrow X_n $) an isomorphism $f^{\bigstar}_{\phi}(M_{X_m})\cong M_{X_n}$, where the collection of such morphisms forms a homotopy-coherent diagram.
\\ \\
\noindent
This concrete description allows us to put a $t$-structure on $D_{rh}^b(\xs)$.  Recall that for a smooth morphism $f$, of relative dimension $d$, we have canonical isomorphisms $f^{\dagger}\cong f^![-d]\cong f^{\bigstar}[d]$. Moreover $f^{\dagger}$ is $t$-exact.  Let $d_{\pi}$ be the relative dimension of our fixed atlas. Thus $D^b_{rh}(\xs)$ inherits a canonical $t$-structure given by the following: $M\in D^b_{rh}(\xs)_{\geq 0}$ if and only if $M_{X_0}[d_{\pi})]\in D_{rh}^b(X_0)_{\geq 0}$ and $M\in D^b_{rh}(\xs)_{\leq 0}$ if and only if $M_{X_0}[d_{\pi})]\in D_{rh}^b(X_0)_{\leq 0}$.  We define category of regular, holonomic $D$-modules on $\xs$ to be the heart of this $t$-structure.

\section{Constructible Sheaves on Algebraic Stacks}
\noindent
let $\xa$ be a smooth complex analytic space.  Let $\shx$ be the $\C$-linear Abelian category of sheaves (in the analytic topology) of $\C$-vector spaces on $\xa$.  We say that a sheaf $\mathcal{F}\in \shx$, is \textit{locally constant constructible}, abbreviated as \textit{llc}, if it is locally constant and has finite dimensional stalks. We say that $\mathcal{F}$ is constructible if it is \textit{llc} on each piece of some (analytic) stratification. We denote the full Abelian subcategory of constructible sheaves on $\xa$ by $\shacx\subset \shx$.  
\\ \\
\noindent
Now let $X$ be a smooth algebraic variety over $\C$ and let $X^{an} =\xa$ denote its complex analytification.  We say that a sheaf $\mathcal{F}\in \shacx$ is \textit{algebraically} constructible if it is \textit{llc} on each piece of some \textit{algebraic} stratification.  We denote this full subcategory by $\shcx \subset \shacx$.
\\ \\
\noindent
We now define the dg-category of (algebraically) constructible sheaves on $\xa$.  Let $\textbf{Ch}^b(\xa, \C)$ denote the dg-category of bounded complexes of objects in $\shx$.   Because $\shx$ is a Grothendieck Abelian category we define the bounded, derived dg-category of sheaves in complex vector spaces on $\xa$ to be the full sub-dg-category $\textbf{dg-Mod}^b(\xa, \C)\subset \textbf{Ch}^b(\xa, \C) $ given by complexes of injective objects. The homotopy category of $\textbf{dg-Mod}^b(\xa, \C)$ is just the usual bounded derived category of $\shx$.  

We define the dg-category of \textit{algebraically} constructible sheaves on $\xa$ to be the full sub-dg-category $\textbf{dg-Mod}^b_c(\xa, \C) \subset \textbf{dg-Mod}^b(\xa, \C)$, given by objects with algebraically constructible homology.  The dg-category $\textbf{dg-Mod}^b_c(\xa, \C)$ is triangulated and we equip it with the \textit{perverse} $t$-structure for the middle perversity (\S8.1 \cite{DH}).
\\ \\
\noindent
As in the case of $D$-modules, these dg-categories are subject to the six functor formalism.
Let $X$ and $Y$ be smooth algebraic varieties over $\C$ and  $f: X\rightarrow Y$ be a morphism.  This induces a morphism $f_{an}: X^{an}\rightarrow Y^{an}$.  As explained in \S4.5 of \cite{DH}, this induces the cohomological functors: $$f^{-1}: \textbf{dg-Mod}^b_c(Y^{an}, \C)\rightarrow \textbf{dg-Mod}^b_c(X^{an}, \C),$$ $$f_* :\textbf{dg-Mod}^b_c(X^{an}, \C)\rightarrow \textbf{dg-Mod}^b_c(Y^{an}, \C).$$ 
 $$f^{!}: \textbf{dg-Mod}^b_c(Y^{an}, \C)\rightarrow \textbf{dg-Mod}^b_c(X^{an}, \C),$$
  $$f_!: \textbf{dg-Mod}^b_c(X^{an}, \C)\rightarrow \textbf{dg-Mod}^b_c(Y^{an}, \C),$$
We again have the following standard facts:
\begin{itemize}
\item For $f$ as above we have the adjoint pairs $(f^{-1}, f_*)$ and $(f_!, f^!)$.
\item  For $f$ smooth of relative dimension $d$  there is a canonical isomorphism $f^{-1}\cong f^!  [-2d]$. Moreover the functor $f^{-1}[d]$ is $t$-exact (with respect to the perverse $t$-structure).
\end{itemize}
\noindent
Let $\xs$ be a smooth algebraic stack over $\C$, which admits a complex algebraic variety as a smooth atlas. Let $\pi: X\rightarrow \xs$ be such an atlas.   Let $\xs^{an}$ be the associated complex analytic stack.   By applying the complex analytification functor we get a smooth complex analytic  atlas $X^{an}\rightarrow \xs^{an}$.  From this we may form the Cech smooth simplicial cover as in the algebraic case, $X^{an}_{\bullet}\rightarrow \xs^{an}$.  We denote the associated simplicial complex analytic space by $\pi_{\bullet}^{an}$.  This gives rise to the pseudofunctor:
$$\mathfrak{Con}^{-1}_{\pi_{\bullet}^{an}}: \Delta \rightarrow \dgt$$
$$\;\;\;\;\;\;\;\;\;\;\;\;\;\;\;\;\;\;\;\;\;\;\;\;\;\;[n] \mapsto \textbf{dg-Mod}_c^b(X^{an}_n, \C).$$
Let $\widehat{\mathfrak{Con}^{-1}_{\pi_{\bullet}^{an}}}$ denote the associated strict 2-functor. As in the case of $D$-modules we associate to this data the corresponding $\infty$-functor:

$$\widehat{\mathfrak{Con}^{-1}_{\pi_{\bullet}^{an}}}_{\infty}: N(\Delta) \rightarrow \dgti.$$
\begin{defin}
We define the derived dg-category of algebraically constructible sheaves on $\xs$ to be the limit
$$\textbf{dg-Mod}_c^b(\xs^{an}, \C):=lim(\widehat{\mathfrak{Con}^{-1}_{\pi_{\bullet}^{an}}}_{\infty}).$$
\end{defin}
\noindent
Concretely, an object $M \in \textbf{dg-Mod}_c^b(\xs^{an}, \C)$ is an assignment for every non-negative integer $n$, an  object $M_{X_n} \in \textbf{dg-Mod}_c^b(X_n^{an}, \C)$, and for every morphism $\phi:[n] \rightarrow [m]$ in $\Delta$ (inducing a morphism $f_{\phi}: X_m\rightarrow X_n $) an isomorphism $f^{-1}_{\phi}(M_{X_m})\cong M_{X_n}$, where the collection of such isomorphisms forms a homotopy-coherent diagram.
 \\ \\
\noindent
Let $d_{\pi}$ be the relative dimension of our fixed atlas.  As in the case of $D$-modules, the triangulated dg-category $ \textbf{dg-Mod}_c^b(\xs^{an}, \C)$ inherits a (perverse) $t$-structure by decreeing that $M\in  \textbf{dg-Mod}_c^b(\xs^{an}, \C)_{\geq 0}$ if and only if $M_{X_0}[d_{\pi}]\in  \textbf{dg-Mod}_c^b(X_0^{an}, \C)_{\geq 0}$.  We define the Abelian category of perverse sheaves on $\xs$ to be the heart of this triangulated dg-category.

\section{The Riemann-Hilbert Correspondence}
Let $X$ be a smooth complex algebraic variety.  For $M \in D^b_{rh}(X)$ we denote the associated analytic $D$-module on the complex analtyic space $X^{an}$ by $M^{an}$.  Let $\Omega_{X^{an}}$ denote the canonical bundle on $X^{an}$.  Recall that $\Omega_{X^{an}}$ is equipped with a canonical (right) analytic $D$-module structure. As explained in \S4.2 of \cite{DH},  we define the de Rham functor:

$$\mathfrak{DR}_X: D_{rh}(X) \rightarrow \textbf{dg-Mod}_c^b(X^{an}, \C)$$
$$ \;\;\;\;\;\;\;\;\;\;\;\;\;\;\;\;\;\;\;\;\;\;\;M \mapsto \Omega_{X^{an}} \otimes^\mathbb{L}_{D_{X^{an}}} M^{an}.$$
\noindent
\begin{theoremrh}
For $X$ a smooth, complex algebraic variety the de Rham functor is a $t$-exact weak equivalence of triangulated dg-categories.
\end{theoremrh}

\begin{proof}
We remind the reader that we have fixed the standard $t$-structure on for $D$-modules and the (middle) perverse $t$-structure for constructible sheaves. A proof is given in \S 7.2.2 of \cite {DH}.
\end{proof}
\noindent
We now extend this result to algebraic stacks.  Let $\xs$ be a smooth complex algebraic stack which admits an algebraic variety as a smooth atlas.  Let $\pi: X\rightarrow \xs$ be such an atlas.

By \S7.1.1.1 of \cite{DH}, if $Y$ and $Z$ are smooth complex algebraic varieties and $f:  Y \rightarrow Z$ is a morphism then there is a canonical isomorphism of functors:
$$\mathfrak{DR}_Z\circ f^{\bigstar} \cong f^{-1}\circ \mathfrak{DR}_Y.$$
\noindent
This induces the pseudonatural transformation
$$\mathfrak{DR}_{\xs}:  \mathfrak{Drh}^{\bigstar}_{\pi_{\bullet}}\rightarrow \mathfrak{Con}^{-1}_{\pi^{an}_{\bullet}},$$
$$\;\;\;\;\;\;\;\;\;[n] \mapsto \mathfrak{DR}_{X_n}$$
\noindent
This in turn gives rise to the strict-natural transformation
$$\widehat{\mathfrak{DR}_{\xs}}:  \widehat{\mathfrak{Drh}^{\bigstar}_{\pi_{\bullet}}}\rightarrow \widehat{\mathfrak{Con}^{-1}_{\pi^{an}_{\bullet}}}$$
By the remark following Theorem 2, a canonical (up to homotopy) morphism
$$\widehat{\mathfrak{DR}_{\xs}}_{\infty}:D_{rh}^b(\xs)\rightarrow \textbf{dg-Mod}_c^b(\xs^{an}, \C)$$
in $\dgti$. We called $\widehat{\mathfrak{DR}_{\xs}}_{\infty}$ the $\infty$-categorical de Rham functor.
\begin{thmRH}
Let $\xs$ be a smooth complex algebraic stack, which admits an algebraic variety as a smooth atlas.  Then the $\infty$-categorical de Rham functor $\widehat{\mathfrak{DR}_{\xs}}_{\infty}$ is an equivalence in $\dgti$.  Moreover it induces a canonical equivalence between the category of regular, holonomic $D$-modules on $\xs$ and the category of perverse sheaves on $\xs$.
\end{thmRH}
\begin{proof}
The classical Riemann-Hilbert correspondence implies that the $\widehat{\mathfrak{DR}_{\xs}}$ becomes a natural isomorphism after composing each $\widehat{\mathfrak{Drh}^{\bigstar}_{\pi_{\bullet}}}$ and $\widehat{\mathfrak{Con}^{-1}_{\pi^{an}_{\bullet}}}$ with the canonical functor $\dgt\rightarrow h(\dgti)$.  Applying Theorem 2 we conclude that $\widehat{\mathfrak{DR}_{\xs}}_{\infty}$ is an equivalence in $\dgti$.  

The classical de Rham functor is $t$-exact for the standard $t$-structure on $D$-modules and the perverse $t$ structure on constructible sheaves.  It is clear by therefore that $\widehat{\mathfrak{DR}_{\xs}}_{\infty}$ induces a $t$-exact morphism in $\dgti$.  This concludes the proof.
\end{proof}

\bibliographystyle{plain}
\bibliography{bibliography}

\end{document}